\documentclass[12pt,twoside,reqno,psamsfonts]{amsart}

\usepackage[OT1]{fontenc}
\usepackage{type1cm}
\usepackage{enumerate}
\usepackage{amssymb}
\usepackage{mathscinet}
\usepackage[dvips]{graphicx}
\usepackage{geometry}        
\usepackage{version}         


\geometry{reset,a4paper,centering}


\numberwithin{equation}{section}

\theoremstyle{plain}
\newtheorem{theorem}{Theorem}[section]

\newtheorem{lemma}[theorem]{Lemma}

\theoremstyle{remark}
\newtheorem{remark}[theorem]{Remark}

\newtheorem{example}[theorem]{Example}

\theoremstyle{definition}

\newcommand{\R}{\mathbb{R}}

\newcommand{\Z}{\mathbb{Z}}
\newcommand{\N}{\mathbb{N}}

\newcommand{\iii}{\mathtt{i}}

\DeclareMathOperator{\spt}{spt}

\DeclareMathOperator{\diam}{diam}


\begin{document}

\title
{Intermediate value property for the Assouad dimension of measures}

\author{Ville Suomala}
\address{Department of Mathematics and Statistics \\
        P.O.Box 8000\\
        FI-90014 University of Oulu\\
    Finland}
\email{ville.suomala@oulu.fi}

\thanks{The author was partially supported by the Academy of Finland CoE in Analysis and Dynamics research. 
}
\subjclass[2010]{Primary 28A80, 54E50.}
\keywords{Assouad dimension. Intermediate value property.}

\begin{abstract}
Hare, Mendivil, and Zuberman have recently shown that if $X\subset\R$ is compact and of non-zero Assouad dimension $\dim_A X$, then for all $s>\dim_A X$, $X$ supports measures with Assouad dimension $s$. We generalize this result to arbitrary complete metric spaces. 
\end{abstract}

\maketitle

\section{Introduction and notation}

The Assouad dimension has its origins in metric geometry and embedding problems, see e.g. \cite{Luukkainen1998, Heinonen2001}. More recently, it has been investigated thoroughly in fractal geometry. For a few of the many recent advances, see \cite{M2011,F2014,KL2017,FO2017,DFSU2019,FFS2020}.
The \emph{Assouad dimension} of a metric space $X=(X,d)$, denoted $\dim_A X$, is defined to be the infimum of those $\alpha\ge  0$ such that the following uniform bound holds for all $x\in X$, $0<r<R<\infty$: 
 \begin{equation}\label{eq:Assouad_set_def}
 N(x,R,r)\le C\left(\frac{R}{r}\right)^\alpha\,\,.
 \end{equation}
Here $N(x,R,r)$ denotes the minimal number of closed $r$-balls needed to cover the closed $R$-ball $B(x,R)=\{y\in X\,:\,d(x,y)\le R\}$ and $0<C<\infty$ is a constant independent of $x$, $r$, and $R$. If there is no $\alpha\in[0,\infty[$ fulfilling the requirement \eqref{eq:Assouad_set_def}, we assign $\dim_A X$ the value $\infty$.

There is a close analogue of this concept for measures. Suppose that $\mu$ is a (Borel regular outer-) measure on $X$. For simplicity, we always assume that our measures are fully supported, that is $\spt\mu=X$. The \emph{Assouad dimension} of $\mu$, denoted by $\dim_A\mu$ is the infimum of those $\alpha\ge 0$ such that for all $x\in X$, $0<r<R<\infty$, 
\begin{equation}\label{Assouad_def}
\frac{\mu(B(x,R))}{\mu(B(x,r))}\le C\left(\frac{R}{r}\right)^\alpha\,,
\end{equation}
where again $C<\infty$ is a constant depending only on $\alpha$. If \eqref{Assouad_def} is not fulfilled for any $\alpha<\infty$, we define $\dim_A\mu=\infty$. 
This concept has appeared in the literature also under the names upper regularity dimension
and upper Assouad dimension
and measures satisfying the condition \eqref{Assouad_def} have been called $\alpha$-homogeneous.

It is well known and easy to see that $\dim_A X<\infty$ if and only if $X$ is geometrically doubling in the following sense: There is a constant $C<\infty$ such that 
\[N(x,2r,r)\le C\] 
for all $x\in X$, $0<r<\infty$. Likewise,  $\dim_A\mu<\infty$ if and only if 
\[
\frac{\mu\bigl( B(x,2r) \bigr)}{\mu\bigl( B(x,r) \bigr)} \leq C\,,
\]
for some constant $C<\infty$, and for all $x \in X$ and $0<r<\infty$. 

The investigation of the relation between the Assouad dimension of sets and that of measures supported on them was pioneered by Vol{\cprime}berg and Konyagin \cite{VK1984, VK1987}, who proved that for a compact metric space $X$ with finite Assouad dimension,
\begin{equation}\label{eq:VK}
\dim_A X=\inf\{\dim_A\mu\,:\,\spt\mu\subset X\}\,.
\end{equation}
Note that $\dim_A X$ is an obvious lower bound for $\dim_A\mu$, whenever $\mu$ is supported on $X$.
Luukkainen and Saksman \cite{LS1998} later extended \eqref{eq:VK} for complete metric spaces. These results, however, left open the question whether all values $s>\dim_A X$ are attainable for $\dim_A\mu$. In other words, is it possible that the Assouad dimension of measures omits some values $s>\dim_A X$? Plausibly not, but an example of Hare, Mendivil, and Zuberman \cite[Proposition 3.8]{HMZ2020} of an infinite compact set $X\subset\R$ such that 
\[\{\dim_A\mu\,:\,\spt\mu\subset X\}=\{0,\infty\}\] 
suggests that this could be a subtle issue. An analogous intermediate value problem for the Assouad dimension of subsets has been considered in \cite{WWW2016,Chenetal2020}. 

Quite recently, Hare, Mendivil, and Zuberman \cite{HMZ2020}, have shown that if $X\subset\R$ is compact and $\dim_A X>0$, then $\dim_A\mu$ indeed attains all values $s>\dim_A X$ when $\mu$ ranges over all measures supported on $X$. In this note, we prove
\begin{theorem}\label{thm:main}
	If $X$ is a complete metric space with $0<\dim_A X<\infty$, then for all $s>\dim_A X$, there is a measure $\mu$ on $X$ such that $\dim_A\mu=s$. 
\end{theorem}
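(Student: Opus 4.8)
The plan is to obtain $\mu$ by redistributing the mass of a near-optimal measure on a disjoint family of balls, so as to manufacture controlled ``spikes'' that raise the Assouad dimension to exactly $s$. Write $t:=\dim_A X$ and fix $s>t$. First, by \eqref{eq:VK} and its extension to complete spaces (Luukkainen--Saksman) choose a fully supported measure $\nu$ on $X$ with $\dim_A\nu<s$ (one may even take $\dim_A\nu$ close to $t$). Every modification below is carried out inside a ball $B(x_j,R_j)$, preserves $\nu(B(x_j,R_j))$, and leaves $\mu$ equal to $\nu$ outside $\bigcup_j B(x_j,R_j)$; in particular $\spt\mu=X$.

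The geometric input is extracted from $t>0$. Since $t/2<\dim_A X$ there are triples $(x_j,R_j,r_j)$ with $N(x_j,R_j,r_j)\ge(R_j/r_j)^{t/2}$, and since $\dim_A X<\infty$ these may be taken with $R_j/r_j\to\infty$ and with the balls $B(x_j,5R_j)$ pairwise disjoint (after passing to a subsequence). Inside $B(x_j,R_j)$ I would consider the covering tree built from maximal separated subsets at the dyadic scales of $[r_j,R_j]$: it has $m_j\asymp\log_2(R_j/r_j)$ levels, bounded branching, and at least $(R_j/r_j)^{t/2}$ leaves, so contracting the non-branching vertices and applying the usual counting argument produces a leaf $y_j\in B(x_j,R_j)$ on whose root--leaf path at least $c\,m_j$ vertices branch, where $c=c(X)>0$; a branching vertex at level $i$ forces a point of $X$ within distance $\asymp 2^{-i}R_j$ of $y_j$. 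Hence a fixed positive proportion of the dyadic scales in $[r_j,R_j]$ are ``live'' for $y_j$, and the remaining ``dead'' scales occur in gaps on which $\mu(B(y_j,\cdot))$ will in any case be constant. Making this dichotomy precise --- that positive Assouad dimension produces, over a divergent range of scales, a point seeing live scales of positive density --- and verifying that it is exactly what the construction needs, is the step I expect to be the main obstacle.

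Given such $y_j$, I would redistribute the $\nu$-mass of $B(x_j,R_j)$ so that the profile $\Phi_j(\rho):=\log_2\mu(B(y_j,\rho))$ is nondecreasing, increases only at live scales and then by a bounded amount, satisfies $\Phi_j(\rho)-\Phi_j(R)\le s\log_2(R/\rho)+C$ for all $\rho<R$ with $C$ independent of $j$, yet has $\Phi_j(r_j)-\Phi_j(R_j)\ge(s-o(1))\log_2(R_j/r_j)$ as $j\to\infty$: on a long run of consecutive live scales one places a clean, essentially $s$-homogeneous staircase, while across a stretch of live scales of density $\delta$ one inflates each live step by the factor $2^{s/\delta}$, placing the required extra mass on the points of $X$ that the live scales supply (distributed over small rescaled copies of $\nu$), dead scales contributing nothing. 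The pairs $B(y_j,r_j)\subset B(y_j,R_j)$ then have mass ratio at least $(R_j/r_j)^{s-o(1)}$ with $R_j/r_j\to\infty$, forcing $\dim_A\mu\ge s$. For the reverse inequality one runs a case analysis on an arbitrary pair $B(x,r)\subset B(x,R)$: if it sits inside one of the redistributed blobs it is controlled by $\dim_A\nu<s$; if it sits inside a single $B(x_j,R_j)$ it is controlled by the stated properties of $\Phi_j$ (bounded single-scale increments exclude unbounded jumps, and $\Phi_j(\rho)-\Phi_j(R)\le s\log_2(R/\rho)+C$ is the estimate itself); and a pair meeting several of the disjoint balls $B(x_j,R_j)$ reduces, since the masses $\nu(B(x_j,R_j))$ were preserved, to the estimate for $\nu$ together with the two previous cases. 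Making all the implied constants uniform in $j$ is then bookkeeping.
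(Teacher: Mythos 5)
Your overall strategy (surgery on a near-optimal measure $\nu$ to implant spikes of exponent $s$ at scale ratios $R_j/r_j\to\infty$) is genuinely different from the paper, which builds a one-parameter family $\mu_p$ adapted to generalized dyadic cubes and obtains $\dim_A\mu_p=s$ from continuity in $p$ plus the intermediate value theorem. But the central step of your construction fails as stated. The profile you require, namely $\Phi_j(r_j)-\Phi_j(R_j)\ge (s-o(1))\log_2(R_j/r_j)$ together with $\Phi_j(\rho)-\Phi_j(R)\le s\log_2(R/\rho)+C$ for all subranges, with $C$ uniform in $j$, need not exist: $\Phi_j$ can only decrease at scales where the corresponding annulus around $y_j$ actually contains points of $X$, and positive Assouad dimension only guarantees such ``live'' scales in positive proportion, with no control on where they sit. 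For instance, if $X\cap B(x_j,5R_j)$ is a $\sqrt{R_jr_j}$-net of an interval of length $R_j$, then $N(x_j,R_j,r_j)\gtrsim (R_j/r_j)^{1/2}$, yet every point is isolated at distance $\sqrt{R_jr_j}$, so for any measure $\mu(B(y_j,\rho))$ is constant for $r_j\le\rho<\sqrt{R_jr_j}$ and the subrange bound forces $\Phi_j(r_j)-\Phi_j(R_j)\le \tfrac{s}{2}\log_2(R_j/r_j)+C$; such blocks are perfectly compatible with $0<\dim_A X<\infty$. Your proposed remedy --- inflating each live step by $2^{s/\delta}$ where $\delta$ is the density of live scales across a stretch --- breaks the upper bound whenever the density fluctuates inside the stretch: a run of $k$ consecutive live scales would produce a drop $2^{(s/\delta)k}$ over ratio $2^k$, i.e.\ local exponent $s/\delta>s$, with error unbounded in $k$. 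To save the argument you must abandon the full range $[r_j,R_j]$ and locate subranges of diverging length on which the live scales are well enough distributed to support a calibrated staircase; proving that such subranges always exist is precisely the combinatorial work you flag as ``the main obstacle'' and leave undone, and it is the heart of the matter rather than a routine verification.

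Two further assertions are not automatic. First, witnessing triples with $R_j/r_j\to\infty$ cannot simply be assumed, after passing to a subsequence, to have pairwise disjoint balls $B(x_j,5R_j)$: all witnesses may share a common point (nested balls around a single accumulation point of high ``local complexity''), and relocalizing them, e.g.\ via submultiplicativity of covering numbers, requires an argument. Second, your upper-bound case analysis omits pairs in which a small ball $B(x,r)$ is centered just outside a surgered ball $B(x_j,R_j)$ but overlaps it; preserving only the total mass $\nu(B(x_j,R_j))$ does not prevent $\mu(B(x,r))$ from being far smaller than $\nu(B(x,r))$ there, since mass near the boundary may have been swept toward the spike, so you need a buffer shell on which $\mu=\nu$ or a local comparability estimate. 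Relatedly, ``small rescaled copies of $\nu$'' have no meaning in a general metric space, and a restricted-and-renormalized copy of $\nu$ need not be doubling near the boundary of the ball to which it is restricted. None of these is obviously fatal to the strategy, but together they leave the proposal well short of a proof; the paper's cube-based measures $\mu_p$, combined with Lemma \ref{lem:dimA>0}, Lemma \ref{mulim} and the continuity Lemma \ref{mucont}, avoid all of these issues by constructing the measure globally rather than by local surgery.
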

Theorem \ref{thm:main} extends the result of Hare, Mendivil, and Zuberman to arbitrary complete metric spaces and thus settles the problem raised in \cite[Remark 4.3]{HMZ2020}. We  stress that there is no imposed upper bound on $R$ in the definitions \eqref{eq:Assouad_set_def} and \eqref{Assouad_def} and our result thus makes perfect sense also for unbounded $X$.

Our starting point to prove Theorem \ref{thm:main} is similar to that in \cite{HMZ2020} as we both rely on the construction of generalized cubes in \cite{KRS2012}, see Theorem \ref{thm:dyadic} below. In \cite{HMZ2020}, the authors make use of the geometry of the real-line and impose additional regularity assumptions on the generalized cubes. Our method works in the generality of \cite{KRS2012} without any additional technical  conditions allowing us to obtain the result in its full generality. Another difference is that while for  $s>\dim_A X$, \cite{HMZ2020} provides an ad hoc construction yielding a measure with $\dim_A\mu=s$, we consider a natural family of measures $\mu_p$ and relying on the continuity of the map $p\mapsto\dim_A\mu_p$ verify that $\dim_A\mu_p$ attains all values in $]\dim_A X,\infty[$. 

A result analogous to Theorem \ref{thm:main} for the lower dimension follows by essentially the same proof. We discuss this issue briefly in the last section of the paper.

\section{Toolbox: Generalized cubes}

We begin by restating \cite[Theorem 2.1]{KRS2012}, which is our main tool. It provides us an analogue of the $M$-adic cubes in Euclidean spaces. These ``generalized nested cubes'' will be used throughout these notes.

\begin{theorem}\label{thm:dyadic}
  Let $X\neq\varnothing$ be a metric space such that $N(x,2r,r)<\infty$ for all $x\in X$, $r>0$. For each $0<\delta<\tfrac17$, there
  exists at most countable collections $\mathcal{Q}_k$, $k\in\Z$, of
  Borel sets having the following properties:
  \begin{enumerate}[(i)]
    \item\label{i} $X = \bigcup_{Q\in\mathcal{Q}_k}Q$ for every $k \in \mathbb{Z}$,
    \item\label{ii} 
     If $Q\in\mathcal{Q}_k$, $Q'\in\mathcal{Q}_{k'}$, $k'\ge k$ and $Q\cap Q'\neq\varnothing$, then $Q'\subset Q$.
    \item\label{iii} For every $Q\in\mathcal{Q}_k$, there is a distinguished point $x_{Q} \in X$  so that
    \[
      B\left(x_{Q},\frac13\delta^k\right) \subset Q \subset B\left(x_{Q},2\delta^k\right)\,.
    \]
     \item\label{iv} There exists a point $x_0 \in \bigcap_{k\in\Z}\{x_Q\,:\,Q\in\mathcal{Q}_k\}$. 
\item\label{v} $\{x_{Q}\,:\,Q\in\mathcal{Q}_k\}\subset\{x_{Q}\,:\,Q\in
  \mathcal{Q}_{k+1}\}$ for all $k\in\Z$.
  \end{enumerate}
\end{theorem}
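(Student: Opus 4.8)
The plan is to produce the collections $\mathcal{Q}_k$ in two stages. First I would build a doubly infinite nested family of ``centres'' $x_Q$, which immediately yields the base point of (iv) and the inclusion (v); then I would attach to each centre a Borel ``cube'' $Q$ so that the covering (i), the nesting (ii), and the ball sandwich (iii) hold simultaneously at every scale $\delta^k$, $k\in\Z$. The centres are easy; the real work is to manufacture the cubes so that nesting and the two ball inclusions stay compatible across all scales, with no finest level available to anchor an induction.

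For the centres, fix a base point $x_0\in X$ and a maximal $\delta^0$-separated set $A_0\subset X$ containing $x_0$, and construct the remaining sets $A_k\subset X$, $k\in\Z$, so that each is $\delta^k$-separated (that is, $d(x,x')\ge\delta^k$ for distinct $x,x'\in A_k$), contains $x_0$, and satisfies $A_k\subset A_{k+1}$. Upward this is routine: since $0<\delta<1$, any $\delta^k$-separated set is $\delta^{k+1}$-separated, so $A_k$ extends by Zorn's lemma to a maximal $\delta^{k+1}$-separated set $A_{k+1}\supset A_k$ in $X$. Downward I would take $A_k\subset A_{k+1}$ to be a maximal $\delta^k$-separated subset of $A_{k+1}$ containing $x_0$. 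The point of these choices is a uniform relative density estimate: every centre $y\in A_{k+1}$ lies within $\delta^k$ of $A_k$ (for the upward levels because $A_k$ is maximal in $X$, for the downward levels because $A_k$ is maximal inside $A_{k+1}$), while for large $l$ the net $A_l$ is $\delta^l$-dense in $X$. The separation bound will force the inner inclusion of (iii) and the relative density the outer one; property (iv) holds because $x_0\in A_k$ for all $k$, and (v) by construction.

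For the cubes I would organise the centres into a tree. Define a parent map $\pi_k\colon A_{k+1}\to A_k$ by $\pi_k(y)=y$ if $y\in A_k$ and otherwise a nearest point of $A_k$ to $y$ (of distance $<\delta^k$), with ties resolved by a fixed well-ordering of $X$ so that $\pi_k$ is single valued; composing gives ancestor maps $\pi_{k,l}\colon A_l\to A_k$ for $k\le l$. For $z\in X$ let $n_l(z)\in A_l$ be its nearest level-$l$ centre (ties again broken by the well-ordering), and declare $z\in Q$, where $Q$ is the cube centred at $x\in A_k$, exactly when $\pi_{k,l}(n_l(z))=x$ for all sufficiently large $l$. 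Each such $Q$ is Borel, being built from Voronoi cells and countably many ancestor conditions. The outer inclusion is then clean: telescoping the parent steps and using relative density gives, for large $l$,
\[
d(z,x)\le \delta^{l}+\sum_{i=k}^{l-1}\delta^{i}\le\sum_{i\ge k}\delta^{i}=\frac{\delta^{k}}{1-\delta}<2\delta^{k},
\]
since $\delta<\tfrac17<\tfrac12$, so $Q\subset B(x,2\delta^k)$.

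The main obstacle is everything else, and it is all governed by the calibrated constants $\tfrac13$, $2$ and $\delta<\tfrac17$. First I must show that the defining condition $\pi_{k,l}(n_l(z))=x$ \emph{stabilises}, i.e. that the level-$k$ ancestor of $z$'s nearest fine centre is eventually independent of $l$; this is what replaces the missing finest level, and it makes the assignment $z\mapsto x$ well defined and so gives the covering (i). Second, the nesting (ii) requires that whenever cubes $Q\in\mathcal{Q}_k$ and $Q'\in\mathcal{Q}_{k'}$ with $k'\ge k$ meet, their stabilised ancestors agree at level $k$, forcing $Q'\subset Q$. Third, and most delicate, the inner inclusion $B(x,\tfrac13\delta^k)\subset Q$ does \emph{not} follow from the distance bound alone: a point $z$ with $d(z,x)<\tfrac13\delta^k$ must be shown, by an induction over the finer scales, to have its entire descent chain rooted at $x$, the inductive step using the separation $\ge\delta^k$ of level-$k$ centres together with the room left by $\delta<\tfrac17$ to rule out every competing centre at each refinement. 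The value $\tfrac17$ is precisely the threshold that simultaneously keeps $\delta^k/(1-\delta)$ below the outer radius $2\delta^k$ and leaves enough slack between the separation scale $\delta^k$ and the inner radius $\tfrac13\delta^k$ for this dominance to survive all refinements.
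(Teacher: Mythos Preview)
The paper does not prove this theorem: it is quoted verbatim from \cite[Theorem~2.1]{KRS2012} and used as a black-box tool, so there is no in-paper argument to compare your proposal against. What you have sketched is, in outline, the construction carried out in that reference (refining the Christ and Hyt\"onen--Kairema cube constructions): nested maximal $\delta^k$-nets $A_k$ containing a fixed base point, a parent map between adjacent levels, and cubes assembled from fine-scale Voronoi cells through the resulting ancestry. Your telescoping bound for the outer inclusion and your identification of the three genuine difficulties---stabilisation of the level-$k$ ancestor, the nesting~(ii), and the inner ball inclusion of~(iii)---are on target.

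That said, the proposal is a plan rather than a proof: you name the obstacles but do not resolve them. In particular, the stabilisation claim---that $\pi_{k,l}(n_l(z))$ is eventually constant in $l$---does not follow from the distance estimates you record (two candidate ancestors both lying within $2\delta^k$ of $z$ is compatible with the $\delta^k$-separation of $A_k$), and the cited construction does not argue via stabilisation of that sequence. Rather, one first shows that the parent map forces proximity to be inherited: if $y\in A_{l}$ with $d(y,x)<\tfrac13\delta^k$ for some $x\in A_k$, then the level-$k$ ancestor of $y$ is $x$. This is where $\delta<\tfrac17$ actually enters, via an inequality of the flavour $\tfrac12\delta^k > \tfrac13\delta^k + \delta^k/(1-\delta)\cdot\delta$ ruling out competing ancestors. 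One then defines the cube at $x\in A_k$ directly as the union over $l\ge k$ of the level-$l$ Voronoi cells whose centres descend from $x$; the inner ball, nesting, and covering follow without a separate stabilisation step. A small overstatement: $\tfrac17$ is a convenient sufficient bound, not a sharp threshold---the paper itself remarks that the condition is ``purely technical''.
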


We make some remarks and introduce some further notation. 
We first note that the condition $0<\delta<\tfrac17$  is purely technical and the result holds also for any $\tfrac17\le \delta<1$ with a cost of reducing the parameters $\tfrac13$ and $2$ in \emph{(iii)}. 
We also note that, even if $\delta$ is fixed, the families $\mathcal{Q}_k$ are in no way unique. However, for the sake of simplicity, when we refer to Theorem \ref{thm:dyadic} with a given value of the parmaeter $\delta$, we always assume the families $\mathcal{Q}_k$ have been fixed. We will call the elements of $\bigcup_{k\in\Z}\mathcal{Q}_k$ \emph{cubes} and  refer to the point $x_Q$ as the \emph{center} of the cube $Q$. The distinguished point $x_0$ may be considered the ``origin'' of the space $X$. We also denote by $Q_0$ the unique element of $\mathcal{Q}_0$ that contains $x_0$. This is the ``unit cube'' of $X$.

 	If $Q\in\mathcal{Q}_{k}$, $Q'\in\mathcal{Q}_{k+m}$, $m\in\N$, and $Q'\subset Q$, we denote $Q'\prec_m Q$. We also abbreviate $\prec_1$ to $\prec$. If $Q'\prec Q$, we say that
 	$Q$ is the \emph{parent} of $Q'$, and that $Q'$ is a \emph{child} of $Q$. More generally, we will refer to cubes $Q'\prec_m Q$, $m\in\N$, as \emph{offspring} of $Q$.
 	It is convenient to think about the child-cube $Q'\prec Q$ with $x_{Q'}=x_{Q}$ as a distinguished \emph{central subcube} of $Q$. If $Q'\prec Q$ is not a central subcube, we say that $Q$ is a \emph{boundary cube}. 
 	Let 
 	\[M_Q=|\{Q'\,:\,Q'\prec Q\}|\]
 	stand for the number of children of $Q$. If $\dim_A X<\infty$, then $M_Q$ is bounded and we denote \[M:=\max_{n\in\Z\,,\,Q\in\mathcal{Q}_n}M_Q\,.\] 
 Indeed, it is easy to check that 
 \begin{equation}\label{eq:M_bound}
 M\le K\delta^{-\dim_A X}\,,
 \end{equation}
 where $K=K(\delta)$ is such that $\log_\delta K(\delta)\longrightarrow 0$ as $\delta\downarrow 0$.

 	To demonstrate Theorem \ref{thm:dyadic} and our later considerations, it is helpful to keep in mind the following simple example.
 	\begin{example}\label{triadic_ex1}
       Let $\delta=\tfrac13$, $X=\R$ and let $\mathcal{Q}_k$ consist of the triadic half-open intervals
    \[\mathcal{Q}_k=\left\{\left[(j-\tfrac12)3^{-k},(j+\tfrac12)3^{-k}\right[\,:\,j\in\Z\right\}\,.\]	
 Now $x_Q$ is literally the center point of each interval $Q$ and we may, for instance, fix $x_0=0$ so that $Q_0=[-\tfrac12,\tfrac12[$. Moreover, $M_Q=3$ for all $Q\in\mathcal{Q}_k$, $k\in\Z$ and thus $M=3=\delta^{-\dim_A X}$. 	
 \end{example}

Before proceeding further, we introduce our last bit of notation. For each $t>0$, we denote by $n_t$ the smallest integer such that $\delta^{n_t}\le t$, that is,
\[n_t=\ulcorner \log t/\log \delta\urcorner\,.\]
This notation allows us to switch to a logarithmic scale in the ratio $R/r$. Namely, if $N=n_r-n_R$, then
\begin{equation}\label{eq:log_scale}
\frac{1}{C}\delta^{-N}\le \frac{R}{r}\le C\delta^{-N}\,.
\end{equation}
Here, and in what follows, we denote by $C$ a positive and finite constant that only depends on $\delta$ and $M$ and whose precise value is of no importance.

 	Our next lemma provides us doubling measures that respect the construction of the generalized cubes. This is a quantitative version of the Vol{\cprime}berg-Konyagin Theorem \eqref{eq:VK} in our situation.

\begin{lemma}\label{mulemma}
Suppose that $X\neq \varnothing$ is a complete metric space. Let $0<\delta<\tfrac17$ and  $0<p<\tfrac1M$.  
 Then there is a measure $\mu_{p}$ on $X$ such that $\dim_A\mu_p<\infty$ and so that for all $Q'\prec Q$ it holds that 
 	\begin{equation}\label{eq:mass_distr_p}
 	\mu_p(Q')=\begin{cases}
 		p\mu_p(Q), &\text{if }Q'\text{ is a boundary cube,}\\
 		\left(1-(M_{Q}-1)p\right)\mu_p(Q), &\text{if }Q'\text{ is the central subcube of }Q\,.
 	\end{cases}
 	\end{equation}
Moreover, if $t>\dim_A X$, then $\dim_A\mu_{p}<t$ for some choice of $\delta$ and $p$.
\end{lemma}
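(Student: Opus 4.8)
The plan is to construct $\mu_p$ by distributing unit mass down the tree of generalized cubes according to \eqref{eq:mass_distr_p}, to check that this yields a fully supported Borel measure, to prove the quantitative bound $\dim_A\mu_p\le\log p/\log\delta$, and finally to deduce the last assertion from \eqref{eq:M_bound}. Throughout, for $x\in X$ and $k\in\Z$ let $Q^{(k)}(x)$ denote the unique cube of $\mathcal{Q}_k$ containing $x$ (unique by \emph{(i)} and \emph{(ii)} applied with $k'=k$), and write $Q_k=Q^{(k)}(x_0)$, so that $Q_k$ is the central subcube of $Q_{k-1}$ by \emph{(iv)}--\emph{(v)}.

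\emph{Construction of $\mu_p$.} Set $\mu_p(Q_0)=1$. For the offspring of $Q_0$ we define $\mu_p$ recursively by \eqref{eq:mass_distr_p}: this is consistent because the weights attached to the $M_Q$ children of a cube $Q$ sum to $(M_Q-1)p+(1-(M_Q-1)p)=1$, and because $0<p<\tfrac1M\le\tfrac1{M_Q}$ forces every weight to lie in $[p,1]$, so that each cube gets a strictly positive mass. To reach all of $X$ we also move upwards along the tower $\dots\prec Q_{-2}\prec Q_{-1}\prec Q_0$, putting $\mu_p(Q_{k-1})=\mu_p(Q_k)/(1-(M_{Q_{k-1}}-1)p)$ and distributing the masses of the children of $Q_{k-1}$ according to \eqref{eq:mass_distr_p}; since $B(x_0,\tfrac13\delta^k)\subset Q_k$ by \emph{(iii)} and $\bigcup_{k\in\Z}B(x_0,\tfrac13\delta^k)=X$, this assigns a mass, consistently, to every cube. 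Since the elements of $\mathcal{Q}_k$ are pairwise disjoint, cover $X$, and have diameters tending to $0$ along any decreasing chain, a standard (Method~I, Carath\'eodory) argument extends the resulting additive set function on cubes to a Borel measure, still denoted $\mu_p$, with $\mu_p(Q)$ equal to the prescribed value for every cube $Q$. Then \eqref{eq:mass_distr_p} holds by construction, and $\spt\mu_p=X$ because every ball $B(x,r)$ contains a cube of positive mass, e.g.\ $Q^{(k)}(x)\subset B(x,4\delta^k)$ for $k$ large.

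\emph{Reduction of the dimension bound.} Two observations: first, \eqref{eq:mass_distr_p} gives at once
\[
p^m\mu_p(Q)\le\mu_p(Q')\le\mu_p(Q)\qquad\text{whenever }Q'\prec_m Q;
\]
second, a routine covering argument based on \eqref{eq:Assouad_set_def} shows that for all $x\in X$ and $0<r<R$,
\[
\mu_p(B(x,R))\le C\max\bigl\{\mu_p(Q):Q\in\mathcal{Q}_{n_R},\ Q\cap B(x,R)\neq\varnothing\bigr\},\qquad\mu_p(B(x,r))\ge\mu_p\bigl(Q^{(n_r+1)}(x)\bigr),
\]
the second inequality because $Q^{(n_r+1)}(x)\subset B(x,4\delta^{n_r+1})\subset B(x,r)$ by \emph{(iii)}. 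Now $Q^{(n_R)}(x)$ occurs in the first maximum, any cube $Q$ occurring there satisfies $\dist(Q,Q^{(n_R)}(x))\le D\delta^{n_R}$ for a constant $D=D(\delta)$ (both cubes lie in $B(x,5R)$ and $R<\delta^{n_R-1}$), and $Q^{(n_r+1)}(x)\prec_{n_r-n_R+1}Q^{(n_R)}(x)$. Hence $\dim_A\mu_p\le\log p/\log\delta$ will follow once we prove the
\begin{note}
\emph{Key estimate.} For every $D<\infty$ there is $C=C(D,\delta,M)<\infty$ such that $\mu_p(Q)\le C\mu_p(Q')$ whenever $Q,Q'\in\mathcal{Q}_k$, $k\in\Z$, and $\dist(Q,Q')\le D\delta^k$.
\end{note}
Indeed, combining the key estimate with the two displays gives $\mu_p(B(x,R))\le C\mu_p(Q^{(n_R)}(x))\le Cp^{-(n_r-n_R+1)}\mu_p(Q^{(n_r+1)}(x))\le Cp^{-(n_r-n_R)}\mu_p(B(x,r))$, and since $\delta^{-(n_r-n_R)}\le C(R/r)$ by \eqref{eq:log_scale}, this is at most $C(R/r)^{\log p/\log\delta}\mu_p(B(x,r))$.

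\emph{On the key estimate, and the main obstacle.} Let $A\in\mathcal{Q}_{k-m}$ be the least common ancestor of $Q$ and $Q'$. Then $\mu_p(Q)/\mu_p(Q')$ is a product of $m$ factors $w_i/w_i'$, each factor in $[p,1]$, where $w_i$ equals $p$ at a boundary step and $1-(M_{(\cdot)}-1)p$ at a central step of the chain from $A$ to $Q$, and similarly for $w_i'$. The crux is to control the discrepancy between the numbers of boundary steps along the two chains. The relevant geometric input is that a maximal central run in the chain to $Q$ keeps the centre $y$ of the cubes it traverses fixed, the first subsequent boundary step places $Q$ outside $B(y,\tfrac13\delta^{(\cdot)})$, and distinct cubes of $\mathcal{Q}_k$ are separated by the $\tfrac13\delta^k$-balls about their centres (property \emph{(iii)}); together with $\dist(Q,Q')\le D\delta^k$ this bounds $m$ unless the boundary runs of the two chains essentially match, in either case yielding $\mu_p(Q)/\mu_p(Q')\le C(D,\delta,M)$. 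Making this precise, using \emph{(iii)}--\emph{(v)}, is the main technical step: the difficulty is that $m$ itself need not be bounded --- consider two adjacent triadic intervals straddling the point $\tfrac12$ in Example~\ref{triadic_ex1} --- so the comparison genuinely has to be carried through long chains.

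\emph{The final assertion.} Let $t>\dim_A X$. By \eqref{eq:M_bound},
\[
\frac{\log M}{\log(1/\delta)}\le\dim_A X+\frac{\log K(\delta)}{\log(1/\delta)}\longrightarrow\dim_A X\qquad\text{as }\delta\downarrow 0,
\]
so we may fix $\delta\in(0,\tfrac17)$ with $\log M/\log(1/\delta)<t$. Since $\log(1/p)/\log(1/\delta)\to\log M/\log(1/\delta)$ as $p\uparrow\tfrac1M$, we may then fix $p\in(0,\tfrac1M)$ with $\log(1/p)/\log(1/\delta)<t$, and for this choice the previous steps give $\dim_A\mu_p\le\log p/\log\delta=\log(1/p)/\log(1/\delta)<t$.
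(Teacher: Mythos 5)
Your overall architecture is the paper's: take the mass distribution \eqref{eq:mass_distr_p} for granted (or build it by Carath\'eodory extension), reduce the Assouad dimension of $\mu_p$ to ratios of cube measures along offspring chains via a ball-versus-cube comparison, bound those ratios by $p^{-m}$ to get $\dim_A\mu_p\le\log_\delta p$, and then choose $\delta$ and $p$ using \eqref{eq:M_bound}; your final parameter computation is correct and is just a rearrangement of the paper's bound $\dim_A X-\log_\delta K+\log_\delta(pM)$. The one genuine gap is exactly the step you flag and do not complete: your ``key estimate'' ($\mu_p(Q)\le C\mu_p(Q')$ for $Q,Q'\in\mathcal Q_k$ with $\dist(Q,Q')\le D\delta^k$), which is precisely the content of the upper half of the paper's display \eqref{eq:ballsandcubes}. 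The paper defers this to \cite{KRS2012} as an exercise; you leave it as ``the main technical step'' with a sketch whose framing is off: there is no need to ``match the boundary runs'' of the two chains, and the unboundedness of $m$ is not the obstacle.

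Here is the short argument you are missing. Let $A\in\mathcal Q_{k-m}$ be the least common ancestor of $Q\ne Q'$ (it exists by \emph{(iii)}--\emph{(iv)}), and write $A=A_0\succ A_1\succ\dots\succ A_m=Q$. Since the chains to $Q$ and $Q'$ diverge already at level $k-m+1$, same-level cubes are disjoint, so $Q'\cap A_j=\varnothing$ for every $j\ge1$. Now suppose the step $A_j\succ A_{j+1}$ is central for some $j\ge1$. Then $x_{A_{j+1}}=x_{A_j}$, so by \emph{(iii)} every point of $Q\subset A_{j+1}$ lies within $2\delta^{k-m+j+1}$ of $x_{A_j}$, while every point outside $A_j$ (in particular every point of $Q'$) lies at distance more than $\tfrac13\delta^{k-m+j}$ from $x_{A_j}$. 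Hence $\dist(Q,Q')\ge(\tfrac13-2\delta)\delta^{k-m+j}\ge\tfrac1{21}\delta^{k-m+j}$, and combining with $\dist(Q,Q')\le D\delta^k$ forces $m-j\le L:=\log_{1/\delta}(21D)$. So \emph{every} step of the chain except possibly the first and the last $L$ is a boundary step; since each central factor also lies in $[p,1]$, this gives $p^m\mu_p(A)\le\mu_p(Q)\le p^{m-L-1}\mu_p(A)$, and the same for $Q'$, whence $\mu_p(Q)/\mu_p(Q')\le p^{-(L+1)}=:C(D,\delta,p)$. (The constant depends on $p$, which is harmless here since $p$ is fixed; for Lemma \ref{mucont} one only needs uniformity for $p$ bounded away from $0$, as the paper notes.) With this inserted, your proof is complete and coincides in substance with the paper's.
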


\begin{proof}
The claim is implicitely derived in the proof of \cite[Theorem 3.1]{KRS2012} (see Remark 5.1 (2) in the same paper) but since in \cite{KRS2012} it was stated in a less quantitative form, let us briefly sketch the main idea. 

The construction of the measures $\mu_p$ satisfying \eqref{eq:mass_distr_p}, given the families $\mathcal{Q}_k$, is detailed in \cite{KRS2012} and we thus take their existence for granted. It is easy to check that for the measures $\mu_p$, the Assouad dimension may be determined by looking at the ratios of the measures of cubes along offspring chains. In particular, if $\alpha>0$ is such that
\begin{equation}\label{eq:dimAmup}
\frac{\mu(Q)}{\mu(Q')}\le\delta^{-m\alpha}\,,
\end{equation}
whenever $m\in\N$, $Q,Q'\in\bigcup_k\mathcal{Q}_k$, and $Q'\prec_m Q$, then we may conclude that $\dim_A\mu_p\le\alpha$. This is a consequence of the fact that if $x\in X$, $t>0$, and $x\in Q\in\mathcal{Q}_{n_t}$, then
\begin{equation}\label{eq:ballsandcubes}
\frac{1}{C}\mu_p(Q)\le \mu_p\left(B(x,t)\right)\le C\mu_p(Q)\,.
\end{equation}
To verify \eqref{eq:ballsandcubes} is an easy exercise using \eqref{eq:mass_distr_p} and Theorem \ref{thm:dyadic}, or see the proof of \cite[Theorem 3.1]{KRS2012}.
Thus, the task is to show that for suitably chosen $\delta$ and $p$, the upper bound \eqref{eq:dimAmup} holds for $\alpha$ close to $\dim_A X$.

To that end, let us fix $0<\delta<\tfrac17$ and the families $\mathcal{Q}_k$ provided by Theorem \ref{thm:dyadic}. Consider $0<p<\tfrac1M$ and let $\mu_p$ satisfy \eqref{eq:mass_distr_p}. Let $Q,Q'\in\bigcup_{k}\mathcal{Q}_k$ and suppose that $Q'\prec_m Q$. Using \eqref{eq:M_bound}, we have
\begin{equation}\label{eq:p_del}
\begin{split}
\frac{\mu_p(Q)}{\mu_p(Q')}&\le p^{-m}
=(pM)^{-m} M^m\\
&\le(pM)^{-m} K^m\delta^{-m\dim_A X}\\
&=\delta^{-m(\dim_A X-\log_{\delta} K+\log_\delta(pM))} 
\end{split}
\end{equation}
Thus, \eqref{eq:dimAmup} holds for $\alpha=\dim_A X-\log_{\delta} K+\log_\delta(pM)$ and whence
\[\dim_A\mu_p\le \dim_A X-\log_{\delta} K+\log_\delta(pM)\,.\] 
The upper bound can be made arbitrarily close to $\dim_A X$ by first choosing $\delta$ small enough and then $p$ close enough to $\tfrac1M$.
\end{proof}

\begin{remark}
(1) The condition \eqref{eq:mass_distr_p} implies that (having the nested cubes construction fixed), the measures $\mu_{p}$ are uniquely defined up to a multiplicative constant. If we further require that $\mu_{p}(Q_0)=1$, the measure $\mu_{p}$ is thus completely determined by this condition.

(2) It is instructive to think about the measures $\mu_p$ in the setting of the Example \ref{triadic_ex1}, where they are uniquely defined for all $0<p<\tfrac12$ via \eqref{eq:mass_distr_p} and the condition $\mu_p[-\tfrac12,\tfrac12[=1$. In this case, the measure of each triadic interval is split among its triadic child-intervals in such a way that the central child inherits $(1-2p)$ times the mass of its parent and the rest is split equally between the two boundary children.  
It is an easy exercise to verify that  $\dim_A\mu_p=-\log_3 p$, for all $0<p\le\tfrac13$. Thus, $\dim_A\mu_p$ varies continuously in $p$ and attains all values $\ge 1=\dim_A\R$.  Our main result, Theorem \ref{thm:main}, and its proof, reflect this phenomenon in the more abstract setting of Lemma \ref{mulemma}.  
\end{remark}

To conclude this section, we provide the following variant of \cite[Lemma 3.5]{HMZ2020}.

\begin{lemma}\label{lem:dimA>0}
	Suppose that $\dim_A X>0$. Then,  
	for a suitably small $\beta>0$ (depending only on $\dim_A X$, $\delta$, and $M$), there are arbitrarily large $N\in\N$ and cubes 
	\begin{equation}\label{eq:cube_seq}
	Q^N\prec Q^{N-1}\prec\ldots\prec Q^1\prec Q^0
	\end{equation}
	such that at least $\beta N$ of the cubes $Q^1,\ldots,Q^N$ are boundary cubes. 
\end{lemma}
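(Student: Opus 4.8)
The plan is to argue by contradiction: I will show that if no small $\beta$ has the stated property, then $\dim_A X=0$, against the hypothesis. The geometric point behind this is that passing to a \emph{central} subcube does not move the center, and so does not help to cover a ball by smaller balls; a cube with very many offspring is therefore forced to have many boundary cubes along its descending chains.

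\emph{Step 1: from balls to offspring of cubes.} Fix $x\in X$ and $0<r<R$, put $N=n_r-n_R$, $m=N+1$, and let $k_0=n_R-1$, so that $\delta^{k_0}>R$ and $k_0\le n_r$. First I would check that only a bounded number of cubes of $\mathcal{Q}_{k_0}$ can meet $B(x,R)$: if $Q\in\mathcal{Q}_{k_0}$ meets $B(x,R)$, the diameter bound in Theorem \ref{thm:dyadic}(iii) gives $x_Q\in B(x,5\delta^{k_0})$, while distinct cubes of generation $k_0$ are disjoint and each contains $B(x_Q,\tfrac13\delta^{k_0})$; hence the centers $x_Q$ form a $\tfrac13\delta^{k_0}$-separated subset of $B(x,5\delta^{k_0})$, whose cardinality is bounded in terms of $\delta$ and $M$ by the geometric doubling of $X$. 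By Theorem \ref{thm:dyadic}(ii), every cube of $\mathcal{Q}_{n_r}$ meeting $B(x,R)$ is an offspring of one of these finitely many cubes, and each cube $Q'\in\mathcal{Q}_{n_r}$ lies in $B(x_{Q'},2r)$, hence is covered by $C$ balls of radius $r$. Putting this together and using \eqref{eq:log_scale} to pass between $R/r$ and $\delta^{-m}$, I obtain
\[
N(x,R,r)\ \le\ C\,\max_{Q}\,\#\{Q'\ :\ Q'\prec_m Q\}\,,\qquad R/r\asymp\delta^{-m}\,.
\]

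\emph{Step 2: offspring versus boundary cubes.} An offspring $Q'\prec_m Q$ is recorded by its descending chain $Q'=P_m\prec\cdots\prec P_1\prec P_0=Q$, and at each step there is a unique central choice (the central subcube, which exists and is unique by Theorem \ref{thm:dyadic}(v)) together with at most $M_{P_{i-1}}-1\le M-1$ boundary choices. Thus, if \emph{every} such chain has fewer than $\beta m$ boundary cubes among $P_1,\dots,P_m$, then
\[
\#\{Q'\ :\ Q'\prec_m Q\}\ \le\ \sum_{b<\beta m}\binom{m}{b}(M-1)^b\ \le\ \bigl(2^{H(\beta)}(M-1)^\beta\bigr)^m=:\lambda(\beta)^m\,,
\]
using the standard entropy bound $\sum_{b\le\beta m}\binom{m}{b}\le 2^{H(\beta)m}$ for $\beta\le\tfrac12$ ($H$ the binary entropy function) together with $M\ge2$; note $\lambda(\beta)\to1$ as $\beta\downarrow0$. (If $M=1$ then $X$ is a single point and $\dim_A X=0$, which is excluded, so we may assume $M\ge2$.)

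\emph{Step 3 and conclusion.} Suppose the lemma fails: for every $\beta>0$ there is $N_0$ so that every chain of length $N\ge N_0$ has fewer than $\beta N$ boundary cubes. Combining Steps 1 and 2, $N(x,R,r)\le C\lambda(\beta)^{m}$ once $m$ is large. On the other hand, $\dim_A X>0$ means that \eqref{eq:Assouad_set_def} fails for the exponent $\tfrac12\dim_A X$, producing $x_n$ and $0<r_n<R_n$ with $N(x_n,R_n,r_n)\ge n\,(R_n/r_n)^{\dim_A X/2}$, and necessarily $R_n/r_n\to\infty$, so the associated $m_n\to\infty$. Comparing the two bounds, taking logarithms, dividing by $m_n$ and letting $n\to\infty$ forces $\lambda(\beta)\ge\delta^{-\dim_A X/2}>1$, which is false once $\beta$ is chosen small enough in terms of $\dim_A X$, $\delta$ and $M$ — a contradiction, establishing the lemma with such a $\beta$. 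The step I expect to be the main obstacle is Step 1: building the dictionary between ball covering numbers and offspring counts of cubes while keeping every constant dependent only on $\delta$ and $M$, and in particular handling the fact that $x$ need not be the center of its own cube, so that $B(x,R)$ need not lie inside a single cube and one must work with the bounded family of ancestor cubes meeting it. The entropy estimate and the limiting argument are routine once Step 1 is in place.
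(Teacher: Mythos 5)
Your proposal is correct and follows essentially the same route as the paper: assume the property fails for a given $\beta$, count the offspring $Q'\prec_m Q$ whose chains contain fewer than $\beta m$ boundary cubes by a binomial-times-$(M-1)^{\beta m}$ bound with an entropy estimate, transfer this to a bound on $N(x,R,r)$ via the boundedly many generation-$n_R$ cubes meeting $B(x,R)$, and conclude that $\dim_A X$ would be too small unless $\beta$ is bounded below. The only differences are cosmetic (entropy function versus Stirling bounds, and an explicit witness sequence for $\dim_A X>0$ instead of directly reading off $\dim_A X\le\kappa(\beta)-\beta\log_\delta M$).
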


\begin{proof}
	Suppose that $\beta>0$ does not satisfy the requirement of the lemma. We prove the lemma by deriving a lower bound for $\beta$. To that end, we fix $N_0$ so large that any cube sequence as in \eqref{eq:cube_seq} with $N\ge N_0$ contains at most $\beta N$ boundary cubes. Then, for all $k\in\Z$, all $Q\in\mathcal{Q}_k$, and all $N\ge N_0$, it holds that
	\begin{equation}\label{binom}
	\left|\left\{Q'\,:\,Q'\prec_N Q\right\}\right|\le\binom{N}{\lfloor\beta N\rfloor} M^{\beta N}\,.
	\end{equation}	
	To see this, label the offspring of $Q$ as follows. Denote by $Q_1$ the central child-cube of $Q$ and by $Q_2,\ldots,Q_{M_Q}$ the boundary children. For each $Q_i$, denote by $Q_{i1}$ the central child-cube of $Q_i$ and by $Q_{i2},\ldots, Q_{iM_{Q_i}}$ the boundary children. Continuing this labelling for $N$ generations of offspring of $Q$, each $Q'\prec_N Q$ gets labelled with a sequence $\iii\in\{1,\ldots,M\}^N$. 
	But, for each $Q_\iii\prec_N Q$, $\iii=i_1\ldots i_N$, only at most $\beta N$ of the symbols $i_k$
	differ from $1$, and for each such $i_k$, there are at most $M$ admissible values. Whence the bound \eqref{binom}.
	
	Let $x\in X$, $0<r<R<\infty$ and consider a cube  $Q\in\mathcal{Q}_{n_R}$ with $Q\cap B(x,R)\neq\varnothing$. There are at most $C$ such cubes. Switching to the logarithmic scale $N=n_r-n_R$ and recalling \eqref{binom}, we observe that the ball $B(x,R)$ may be covered by 
	\begin{equation}\label{binom_eps}
	C\binom{N}{\lfloor\beta N\rfloor} M^{\beta N}
	\end{equation}
	cubes $Q'\in\mathcal{Q}_{n_r}$. 
	Using trivial bounds for factorials, e.g.
	\[n^ne^{1-n}\le n!\le(n+1)^{n+1}e^{-n}\,,\]	and taking logarithms in base $\delta$, we note that
	\[\binom{N}{\lfloor\beta N\rfloor}\le C\delta^{-\kappa(\beta)N}\,,\]
	where $\kappa(\beta)\to 0$ as $\beta\to 0$.
	Thus, \eqref{binom_eps} is bounded from above by
	\begin{align*}
	C \delta^{-N(\kappa(\beta)-\beta\log_\delta M)}\,.
	\end{align*}
	Since each $Q'\in\mathcal{Q}_{n_r}$ may be covered by $C$ balls of radius $r$, and because $\tfrac{R}{r}\ge C\delta^{-N}$, recall \eqref{eq:log_scale}, it follows that
	\[N(x,R,r)\le C\left(\frac{R}{r}\right)^{\kappa(\beta)-\beta\log_\delta M}.\]
	This upper bound is valid irrespective of $x$, $r$ and $R$ and thus 
	\[\dim_A X\le\kappa(\beta)-\beta\log_\delta M\,,\] 
	yielding the required lower bound for $\beta$.
\end{proof}

\section{Proof of Theorem \ref{thm:main}}

In this section, we prove the following two lemmas. Theorem \ref{thm:main} follows readily from these lemmas and the last claim of Lemma \ref{mulemma} along with the intermediate value theorem for continuous functions. We fix $0<\delta<\tfrac17$ and consider the families $\mathcal{Q}_k$ provided by Theorem \ref{thm:dyadic} along with the measures $\mu_p$ provided by Lemma \ref{mulemma}.

\begin{lemma}\label{mulim}
	If $\dim_A X>0$, then $\lim_{p\downarrow 0}\dim_A\mu_{p}=\infty$.
\end{lemma}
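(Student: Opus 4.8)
The plan is to leverage Lemma~\ref{lem:dimA>0}, which already carries the combinatorial load: since $\dim_A X>0$, there is a \emph{fixed} $\beta>0$ (depending only on $\dim_A X$, $\delta$, $M$, but \emph{not} on $p$) and arbitrarily large $N\in\N$ admitting offspring chains $Q^N\prec Q^{N-1}\prec\cdots\prec Q^1\prec Q^0$ in which at least $\beta N$ of the cubes $Q^1,\ldots,Q^N$ are boundary cubes. Reading off the masses from \eqref{eq:mass_distr_p}, each step $Q^{i-1}\to Q^i$ multiplies the mass by $p$ when $Q^i$ is a boundary cube and by $1-(M_{Q^{i-1}}-1)p\le 1$ when $Q^i$ is the central subcube of $Q^{i-1}$. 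Hence $\mu_p(Q^N)\le p^{\beta N}\mu_p(Q^0)$, i.e.
\[
\frac{\mu_p(Q^0)}{\mu_p(Q^N)}\ \ge\ p^{-\beta N}.
\]

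The next step is to convert this cube ratio into a ball ratio eligible for \eqref{Assouad_def}. Write $Q^0\in\mathcal{Q}_k$, so that $Q^N\in\mathcal{Q}_{k+N}$, and put $x:=x_{Q^N}$, $r:=\tfrac13\delta^{k+N}$, $R:=4\delta^k$. By Theorem~\ref{thm:dyadic} \emph{(iii)} we have $B(x,r)\subset Q^N$; and since $x\in Q^N\subset Q^0\subset B(x_{Q^0},2\delta^k)$ we get $d(x,x_{Q^0})\le 2\delta^k$, whence $Q^0\subset B(x_{Q^0},2\delta^k)\subset B(x,4\delta^k)=B(x,R)$. Consequently
\[
\frac{\mu_p(B(x,R))}{\mu_p(B(x,r))}\ \ge\ \frac{\mu_p(Q^0)}{\mu_p(Q^N)}\ \ge\ p^{-\beta N},\qquad\text{while}\qquad \frac{R}{r}=12\,\delta^{-N}.
\]

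Finally I would feed this into \eqref{Assouad_def}: if $\dim_A\mu_p<\alpha$, the Assouad condition for $\mu_p$ provides a constant $C$ (which may depend on $p$, harmlessly) with $p^{-\beta N}\le C\,(12\delta^{-N})^{\alpha}$ for every one of the arbitrarily large $N$ supplied above; a routine rearrangement and letting $N\to\infty$ forces $\alpha\ge\beta\log_\delta p$. Hence $\dim_A\mu_p\ge\beta\log_\delta p$, and since $0<\delta<1$ and $\beta>0$ is independent of $p$, the right-hand side tends to $+\infty$ as $p\downarrow 0$, which is exactly the assertion.

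The only spot that needs genuine care — and the main obstacle, everything else being bookkeeping — is that the two balls in \eqref{Assouad_def} must be \emph{concentric}: one cannot naively use $x_{Q^0}$ for the large ball and $x_{Q^N}$ for the small one. The remedy above is to centre both balls at $x_{Q^N}$ and pay for it by enlarging the outer radius from $2\delta^k$ to $4\delta^k$, a loss that is absorbed into the comparison $R/r\asymp\delta^{-N}$. The second point worth stressing is that the blow-up of $\dim_A\mu_p$ is driven entirely by $\beta$ being a positive constant independent of $p$, which is precisely what Lemma~\ref{lem:dimA>0} extracts from the hypothesis $\dim_A X>0$.
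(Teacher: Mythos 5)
Your proof is correct and follows essentially the same route as the paper: extract a $p$-independent $\beta>0$ and long chains with $\ge\beta N$ boundary cubes from Lemma~\ref{lem:dimA>0}, deduce $\mu_p(Q^0)/\mu_p(Q^N)\ge p^{-\beta N}$, and convert this to a ratio of concentric balls centred at $x_{Q^N}$ to conclude $\dim_A\mu_p\ge\beta\log_\delta p\to\infty$. Your explicit choice of radii $r=\tfrac13\delta^{k+N}$, $R=4\delta^k$ is just a concrete instance of the paper's $r=\tfrac1C\delta^{n+N}$, $R=C\delta^{n}$.
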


\begin{lemma}\label{mucont}
The map $p\mapsto\dim_A\mu_{p}$ is continuous on the interval $]0,\tfrac1M[$.
\end{lemma}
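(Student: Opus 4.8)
The plan is to control $\dim_A\mu_p$ from above and below by explicit functions of $p$ that are continuous, and then deduce continuity of $p\mapsto\dim_A\mu_p$ by a sandwiching/monotonicity argument. The crucial observation, already implicit in the proof of Lemma~\ref{mulemma}, is that for the measures $\mu_p$ the Assouad dimension is governed entirely by the worst ratio $\mu_p(Q)/\mu_p(Q')$ over offspring chains $Q'\prec_m Q$, via \eqref{eq:ballsandcubes} and \eqref{eq:dimAmup}. So I would first record the clean formula
\[
\dim_A\mu_p=\limsup_{m\to\infty}\ \max_{Q'\prec_m Q}\ \frac{1}{m}\log_{\delta}\frac{\mu_p(Q')}{\mu_p(Q)}\,,
\]
i.e.\ $\dim_A\mu_p=\limsup_m \tfrac1m\max\{-\log_\delta(\mu_p(Q)/\mu_p(Q'))\}$, which follows from \eqref{eq:dimAmup}, \eqref{eq:ballsandcubes}, \eqref{eq:log_scale}, and the matching lower bound obtained by choosing $x=x_{Q'}$, $R\asymp\delta^{n(Q)}$, $r\asymp\delta^{n(Q')}$ in the definition \eqref{Assouad_def}.

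Next I would make the dependence on $p$ transparent. By \eqref{eq:mass_distr_p}, if a chain $Q'=Q^m\prec\cdots\prec Q^0=Q$ has exactly $j$ boundary steps and $m-j$ central steps, then
\[
\frac{\mu_p(Q)}{\mu_p(Q')}=p^{-j}\prod_{\text{central steps }i}\bigl(1-(M_{Q^{i-1}}-1)p\bigr)^{-1}\,.
\]
Since $0<p<\tfrac1M$, each central factor lies in $(1-(M-1)p)^{-1}$ between $1$ and a fixed constant depending on $p$ and $M$ only, so
\[
p^{-j}\le \frac{\mu_p(Q)}{\mu_p(Q')}\le p^{-j}\bigl(1-(M-1)p\bigr)^{-(m-j)}\,.
\]
Writing $b(Q,Q',m)=j/m$ for the proportion of boundary steps and $B^*=\limsup_m\sup b$ over all length-$m$ chains (a quantity depending only on $X,\delta$, not on $p$), the displayed bounds give
\[
-B^*\log_\delta p\ \le\ \dim_A\mu_p\ \le\ -B^*\log_\delta p-(1-B^*)\log_\delta\bigl(1-(M-1)p\bigr)\,,
\]
after taking $\tfrac1m\log_\delta(\cdot)$ and passing to the $\limsup$; here one must check that the $\limsup$ of the product over chains is controlled by $B^*$, which is where the uniformity in $Q$ of the branching structure enters. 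Both bounding functions of $p$ are continuous on $]0,\tfrac1M[$, and they agree in the limit $p\uparrow\tfrac1M$ only up to the $-\log_\delta(1-(M-1)p)$ term; to upgrade "sandwiched between two continuous functions" to genuine continuity, I would additionally argue that $p\mapsto\dim_A\mu_p$ is non-increasing (larger $p$ means more mass on boundary cubes is ruled out — more precisely, for $p_1<p_2$ every chain ratio for $\mu_{p_2}$ is at most that for $\mu_{p_1}$ raised to a power $\le 1$ times a bounded correction, so the worst exponent does not increase) and then use that a monotone function squeezed between two continuous functions that are close is continuous; alternatively, sharpen the two bounds so that their difference tends to $0$ as the relevant parameter is refined.

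The main obstacle I anticipate is precisely this last point: the crude upper and lower bounds above differ by the genuinely non-negligible term $-(1-B^*)\log_\delta(1-(M-1)p)$, so they do not by themselves pin down a continuous limit. The honest fix is to do the $\limsup$ over chains more carefully — for a near-extremal chain realizing boundary proportion close to $B^*$, the number of central steps is close to $(1-B^*)m$, and each contributes a factor $(1-(M_{Q^{i-1}}-1)p)^{-1}$ with $M_{Q^{i-1}}\le M$; so the true exponent is $-B^*\log_\delta p-\log_\delta\mathbb{E}[\text{central factor}]$ where the "expectation" is an averaging over which $M_Q$'s actually occur along extremal chains. This is continuous in $p$ because it is a finite combination (the values $M_Q\in\{1,\dots,M\}$ are finitely many) of continuous functions $-\log_\delta(1-(k-1)p)$ with non-negative weights summing to $1-B^*$. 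Establishing that $\dim_A\mu_p$ equals such an expression with $p$-independent weights, i.e.\ that the extremal chains can be taken with a $p$-independent asymptotic ``type,'' is the delicate step; I would handle it by a compactness/averaging argument over the finitely many possible local configurations, or, if that proves awkward, settle for the two-sided continuous bounds together with monotonicity of $p\mapsto\dim_A\mu_p$ and a direct $\eps$-$\delta$ continuity check at each fixed $p_0\in\,]0,\tfrac1M[$, exploiting that for $p$ near $p_0$ the chain ratios change by a factor $(p/p_0)^{-j}$ times something within $(1\pm\eps)^m$, so $\tfrac1m\log_\delta$ of them changes by $O(\eps)+b(Q,Q',m)\log_\delta(p_0/p)$, uniformly in the chain — which yields $|\dim_A\mu_p-\dim_A\mu_{p_0}|\le o(1)$ as $p\to p_0$.
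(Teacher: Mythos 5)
Your closing fallback is, in fact, the paper's proof, and it is the only part of your proposal that works as stated. The paper fixes the parental chain $Q_r=Q^N\prec\cdots\prec Q^0=Q_R$, writes $\mu_p(Q_r)/\mu_p(Q_R)=p^{|\mathfrak{B}|}\prod_{n\in\mathfrak{C}}\bigl(1+(1-M_{Q^{n-1}})p\bigr)$, and compares the two parameters directly on each chain:
\[
\Bigl(\tfrac{1-(M-1)p'}{1-(M-1)p}\Bigr)^N\ \le\ \frac{\mu_p(Q_R)/\mu_p(Q_r)}{\mu_{p'}(Q_R)/\mu_{p'}(Q_r)}\ \le\ \Bigl(\tfrac{p'}{p}\Bigr)^N,
\]
so that if one measure is $\alpha$-homogeneous the other is $(\alpha+\eps)$-homogeneous with $\eps=\max\{\log_\delta p-\log_\delta p',\,\log_\delta(1-(M-1)p')-\log_\delta(1-(M-1)p)\}\to0$ as $p'\to p$; together with the ball-to-cube reduction \eqref{eq:ballsandcubes} this is exactly your ``direct $\eps$-$\delta$ check, uniformly in the chain,'' and that argument is correct and complete. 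The two routes you place before it should be discarded. The sandwich between $-B^*\log_\delta p$ and $-B^*\log_\delta p-(1-B^*)\log_\delta\bigl(1-(M-1)p\bigr)$ cannot yield continuity, as you concede, and identifying a $p$-independent ``type'' of extremal chain is not needed and would be hard to justify. More seriously, the monotonicity of $p\mapsto\dim_A\mu_p$ is not supported by your heuristic: for a fixed chain with $j$ boundary steps the exponent is $\tfrac jm\log_\delta p+\tfrac1m\sum_{\text{central}}\log_\delta\bigl(1-(M_{Q^{i}}-1)p\bigr)$, in which the first term is decreasing in $p$ while each central term is increasing, and its derivative changes sign on $]0,\tfrac1M[$; so neither individual chain exponents nor their supremum need be monotone, and an argument resting on that claim would have a genuine gap. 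Fortunately the uniform chain comparison in your last sentence makes both detours unnecessary.
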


\begin{proof}[Proof of Lemma \ref{mulim}]
	We estimate $\dim_A\mu_p$ from below. Let $\beta>0$ be the constant from Lemma \ref{lem:dimA>0}. Then, there are arbitrarily long offspring chains 
	\[Q^N\prec Q^{N-1}\prec\ldots\prec Q^1\prec Q^0\] 
	with at least $\beta N$ of the cubes $Q^1,\ldots,Q^N$ boundary cubes. Whence, using \eqref{eq:mass_distr_p},
	\[\mu_p(Q^N)\le p^{\beta N}\mu_p(Q^0)\,.\]
	Let  $x=x_{Q^N}$ and pick $n\in\Z$ such that $Q^0\in\mathcal{Q}_n$. Then, for $r=\tfrac1C\delta^{n+N}$, $R=C\delta^{n}$, we have 
	$B(x,r)\subset Q^N$ and $Q^0\subset B(x,R)$. Thus
	\begin{align*}
	\frac{\mu(B(x,R))}{\mu(B(x,r))}\ge\frac{\mu(Q^0)}{\mu(Q^N)}\ge p^{-\beta N}=\delta^{-N(\beta\log_\delta p)}\ge C\left(\frac{R}{r}\right)^{\beta\log_\delta p}\,.
	\end{align*}
	Since $N$, and thus the ratio $R/r$, may be arbitrarily large, this shows that 
	\[\dim_A\mu_p\ge\beta\log_\delta p\longrightarrow\infty\,,\]
	as $p\downarrow 0$.
\end{proof}	

\begin{proof}[Proof of Lemma \ref{mucont}]
Recall from \eqref{eq:ballsandcubes} that for all $x\in X$, $t>0$, and $x\in Q\in\mathcal{Q}_{n_t}$, we have
\begin{equation}\label{eq:red_to_cubes}
\frac{1}{C}\mu_p(Q)\le \mu_p\left(B(x,t)\right)\le C\mu_p(Q)\,.
\end{equation}
More precisely, the constant $C$ may be taken to be uniform in $p$, if $p$ is bounded away from $0$, which we can of course assume.

Consider $x\in X$ and $0<r<R<\infty$. We switch to the logarithmic scale $N=n_r-n_R$.
Let $Q_R\in\mathcal{Q}_{n_R}$, $Q_r\in\mathcal{Q}_{n_r}$ be the unique cubes containing $x$. Then $Q_r\prec_N Q_R$ and by virtue of \eqref{eq:red_to_cubes},
\begin{equation}\label{B_rtoQ_r}
\frac{1}{C}\,\frac{\mu_p(Q_R)}{\mu_p(Q_r)}\le \frac{\mu_p(B(x,R))}{\mu_p(B(x,r))}\le C\,\frac{\mu_p(Q_R)}{\mu_p(Q_r)}\,.
\end{equation}

Consider the parental line of cubes from $Q_R$ to $Q_r$ by denoting 
$Q_r=Q^N\prec Q^{N-1}\prec\ldots\prec Q^1\prec Q^0=Q_R$. 
Let $\mathfrak{B}=\{i\in\{1,\ldots,N\}\,:\,Q^i\text{ is a boundary cube}\}$ and $\mathfrak{C}=\{1,\ldots,N\}\setminus\mathfrak{B}$.
Using \eqref{eq:mass_distr_p}, it follows that
\[\frac{\mu_p(Q_r)}{\mu_p(Q_R)}=p^{|\mathfrak{B}|}\prod_{n\in\mathfrak{C}}\left(1+(1-M_{Q^{n-1}})p\right)\,.\]

Thus, given $0<p<p'<\frac1M$, we have
\[\left(\frac{1-(M-1)p'}{1-(M-1)p}\right)^N\le \frac{\mu_p(Q_R)/\mu_p(Q_r)}{\mu_{p'}(Q_R)/\mu_{p'}(Q_r)}\le\left(\frac{p'}{p}\right)^N\,.\]
Recalling \eqref{eq:log_scale} and \eqref{B_rtoQ_r} and expressing these upper and lower bounds as
\begin{align*}
\left(\frac{p'}{p}\right)^N&=\delta^{N(\log_\delta p'-\log_\delta p)}\,,\\
\left(\frac{1-(M-1)p'}{1-(M-1)p}\right)^N&=\delta^{N\left(\log_\delta\left(1-(M-1)p'\right)-\log_\delta\left(1-(M-1)p\right)\right)}\,,
\end{align*}
we note that if one of the measures $\mu_p$, $\mu_p'$ satisfies the condition \eqref{Assouad_def} with the exponent $\alpha$, then the other satisfies it with the exponent $\alpha+\varepsilon$, where
\begin{equation}\label{eq:key_2}
\varepsilon=\max\left\{\log_\delta p-\log_\delta p', \log_\delta\left(1-(M-1)p'\right)-\log_\delta\left(1-(M-1)p\right)\right\}\,.
\end{equation}
Thus, we observe a quantitative modulus of continuity for $p\mapsto\dim_A\mu_p$.
\end{proof}

\begin{remark}
The key estimate derived in the proof of Lemma \ref{mucont} is the following: Given $0<p<\tfrac1M$, and $\varepsilon>0$, we have 
\begin{equation}\label{eq:key_estimate}
\delta^{\varepsilon N}\le \frac{\mu_p(Q')/\mu_p(Q)}{\mu_{p'}(Q')/\mu_{p'}(Q)}\le\delta^{-\varepsilon N}\quad\quad\text{for all }\quad\quad Q\prec_N Q'\,,
\end{equation}
provided $|p'-p|$ is small enough, see \eqref{eq:key_2}. This estimate actually implies the continuity of $p\mapsto \dim \mu_p$ for a variety of Assouad type dimensions $\dim$ in addition to $\dim_A$ and the lower dimension $\dim_L$ (considered in Section \ref{sec:lower} below). For some variants of the Assouad dimension such as the upper and lower Assouad spectrums  introduced in \cite{HT2019}, and the related quasi-Assouad dimensions, the continuity is obvious from \eqref{eq:key_estimate}. Moreover, if $X$ is compact, then it is relatively easy to derive the continuity of 
$p\mapsto\dim_q\mu_p$,  $p\mapsto\dim_M\mu_p$, $p\mapsto\dim_F\mu_p$ using \eqref{eq:key_estimate}. Here 
$\dim_q\mu$, where $1\neq q>0$, denotes the $L^q$-dimension, see e.g. \cite{KRS2016} for the definition. Moreover, $\dim_M\mu$ and $\dim_F\mu$ denote the Minkowski and  Frostman dimensions of $\mu$, respectively, as defined in \cite{FK2020}.
\end{remark}

\section{Attainable values for the lower dimension}\label{sec:lower}

In this final section, we discuss the following ``dual'' result for Theorem \ref{thm:main}. 
\begin{theorem}\label{thm:lower}
If $X$ is a complete metric space with $\dim_A X<\infty$, then for all $0<s<\dim_L X$, there is a measure $\mu$ with support $X$ such that $\dim_L\mu=s$.
\end{theorem}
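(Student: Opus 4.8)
The plan is to mirror the proof of Theorem \ref{thm:main}, running the same three-lemma scheme for the lower dimension $\dim_L$ in place of $\dim_A$. Recall that the lower dimension of a measure $\mu$ is the supremum of those $\alpha\ge 0$ for which $\mu(B(x,R))/\mu(B(x,r))\ge c(R/r)^\alpha$ holds for all $x\in X$ and all $0<r<R<\infty$; it is the natural ``minimal'' counterpart of the Assouad dimension, and one always has $\dim_L\mu\le\dim_L X$ for $\mu$ supported on $X$. The family $\mu_p$, $0<p<\tfrac1M$, from Lemma \ref{mulemma} is again the object of study, and the key input is the estimate \eqref{eq:key_estimate}, which is completely symmetric in $p$ and $p'$ and hence governs $\dim_L\mu_p$ exactly as it governs $\dim_A\mu_p$.

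First I would record the analogue of Lemma \ref{mucont}: the map $p\mapsto\dim_L\mu_p$ is continuous on $]0,\tfrac1M[$. This is immediate from \eqref{eq:key_estimate} together with the reduction \eqref{B_rtoQ_r} of ball ratios to cube ratios, exactly as in the proof of Lemma \ref{mucont}; the quantitative modulus of continuity $\varepsilon$ in \eqref{eq:key_2} works verbatim, only now it perturbs a lower exponent $\alpha$ to $\alpha-\varepsilon$ rather than $\alpha+\varepsilon$. Second, I would establish the $\dim_L$-analogue of the fact that $\mu_p$ can be made to have lower dimension close to $\dim_L X$: as $p\uparrow\tfrac1M$ (equivalently $pM\uparrow 1$, pushing $\mu_p$ toward the ``most uniform'' distribution on the cubes), $\dim_L\mu_p$ should tend to a value at least $\dim_L X$ (and exactly $\dim_L X$ if $M=\delta^{-\dim_A X}=\delta^{-\dim_L X}$, as in the regular Example \ref{triadic_ex1}). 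More carefully, one needs a lower bound on the lower dimension of a set analogous to Lemma \ref{lem:dimA>0}: for $\dim_L X$ close to its maximal value, short offspring chains cannot contain too few boundary cubes, which via \eqref{eq:mass_distr_p} keeps the cube-ratios $\mu_p(Q)/\mu_p(Q')$ from growing, hence bounds $\dim_L\mu_p$ below. Third, the $\dim_L$-analogue of Lemma \ref{mulim}: $\lim_{p\downarrow 0}\dim_L\mu_p=0$. Here one exhibits, for arbitrarily large $N$, an offspring chain $Q^N\prec\cdots\prec Q^0$ consisting \emph{entirely} of central subcubes (such a chain exists because every cube has a central child, by property \eqref{v} and the definition of central subcube), along which $\mu_p(Q^N)=\prod(1-(M_{Q^{n-1}}-1)p)\,\mu_p(Q^0)\ge(1-(M-1)p)^N\mu_p(Q^0)$; choosing centers and radii as in the proof of Lemma \ref{mulim} gives $\mu_p(B(x,R))/\mu_p(B(x,r))\le C(R/r)^{-\log_\delta(1-(M-1)p)}$, forcing $\dim_L\mu_p\le-\log_\delta(1-(M-1)p)\to 0$ as $p\downarrow 0$. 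Combining the three facts with the intermediate value theorem yields every $s\in\,]0,\dim_L X[$ as a value of $\dim_L\mu_p$, proving Theorem \ref{thm:lower}.

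The main obstacle is the second step — producing a measure whose lower dimension is arbitrarily close to $\dim_L X$ from above — which is genuinely the ``Vol{\cprime}berg–Konyagin for $\dim_L$'' part and is not as automatic as the continuity or the $p\downarrow 0$ limit. One has to argue that when $\dim_L X$ is large, the generalized cubes are forced to branch enough along every offspring chain (a dual version of Lemma \ref{lem:dimA>0} counting boundary cubes from below rather than from above), and then tune $\delta$ small and $p$ near $\tfrac1M$ so that $\dim_L\mu_p=\dim_L X-\log_\delta K(\delta)+\log_\delta(pM)$-type estimates push the lower dimension up to within $\varepsilon$ of $\dim_L X$. I expect this to go through by the same factorial/entropy bookkeeping used in Lemma \ref{lem:dimA>0} and the chain \eqref{eq:p_del}, with inequalities reversed, so the proof is ``by the same method'' but does require checking that each estimate dualizes cleanly; no new ideas beyond those already deployed for Theorem \ref{thm:main} should be needed.
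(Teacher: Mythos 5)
Your steps (1) (continuity of $p\mapsto\dim_L\mu_p$) and (3) (the limit $\dim_L\mu_p\to 0$ as $p\downarrow 0$, obtained by following an all-central chain) are exactly what the paper does. The genuine gap is your step (2), which you correctly flag as the main obstacle but then assert will ``dualize cleanly'': it does not, and the paper says so explicitly (``unfortunately it does not hold in general if there is variation in the numbers $M_Q$''). The family $\mu_p$ alone is in general incapable of producing lower dimension close to $\dim_L X$, for a concrete reason: if some cube $Q$ on the all-central chain through $x_0$ has $M_Q$ much smaller than $M$ --- which is perfectly compatible with $\dim_L X$ being large, since $M_Q$ is only forced to be of order $\delta^{-\dim_L X}$ while $M$ may be of order $\delta^{-\dim_A X}$ --- then by \eqref{eq:mass_distr_p} its central child receives the fraction $1-(M_Q-1)p\ge 1-(M_Q-1)/M$ of the mass of $Q$, a quantity bounded away from $\delta^{s}$ uniformly in $p\in\,]0,\tfrac1M[$. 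Along such a chain the mass does not decay fast enough, so $\dim_L\mu_p<s$ for \emph{every} admissible $p$, and neither pushing $p$ toward $\tfrac1M$ nor shrinking $\delta$ repairs this. Your proposed ``dual of Lemma \ref{lem:dimA>0}'' (every offspring chain must contain many boundary cubes) is false: by properties (iv) and (v) of Theorem \ref{thm:dyadic} there is always an all-central chain.

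The paper's remedy is a genuinely new construction, not a reversal of inequalities: it enlarges the family to measures $\mu_{p,\eta}$ in which each cube has $J$ distinguished central subcubes, all at distance comparable to the parent's diameter from the complement of the parent, and the non-boundary mass is split among them in proportion to a probability vector $\eta$. Taking $J$ of order $\delta^{-s}$ forces every child's share to be at most roughly $\max_i\eta_i$, hence at most $\delta^{s}$ for the uniform vector, even when $M_Q$ is small; the existence of that many well-separated central subcubes whenever $s<\dim_L X$ is precisely the K\"aenm\"aki--Lehrb\"ack input \cite{KL2017}, which is the external ingredient your argument is missing. Your continuity and $p\downarrow 0$ arguments then carry over verbatim to $(p,\eta)\mapsto\dim_L\mu_{p,\eta}$, and the intermediate value theorem finishes as you describe. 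So the skeleton of your proof is right, but its middle pillar must be replaced rather than dualized.
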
 
Here
$\dim_L X$ is  the lower dimension of $X$ defined as the supremum of the exponents $\alpha$,  for which there is a constant $0<C<\infty$ such that 
\[N(x,R,r)\ge C\left(\frac{R}{r}\right)^\alpha\,,\]
 for all $x\in X$ and $0<r<R<\diam(X)$. Analogously, $\dim_L\mu$ is the supremum of those $\alpha$, for which
\[\frac{\mu(B(x,R))}{\mu(B(x,r))}\ge C\left(\frac{R}{r}\right)^\alpha\]
holds irrespective of $x\in X$ and $0<r<R<\diam(X)$. 

For a compact set of reals, this result is also due to Hare, Mendivil, and Zuberman, see \cite[Theorem 4.1]{HMZ2020}. In its full generality, Theorem \ref{thm:lower} may be proved using ideas very similar to our proof of Theorem \ref{thm:main}. Thus, we only sketch the idea and leave the details for the interested reader.

To begin with, we observe that
$p\mapsto\dim_L\mu_p$ is continuous. This follows by the very same proof as Lemma \ref{mucont}, see \eqref{eq:key_estimate}. Likewise,  switching to a chain of central cubes in the proof of Lemma \ref{mulim} gives $\lim_{p\downarrow 0}\dim_L\mu_p=0$. So if we knew that for $s<\dim_L X$, we had $\dim_L\mu_{p}>s$ for a suitably chosen $\delta$ and $p$, the claim would follow just as in the case of Theorem \ref{thm:main}. This holds in some nice cases such as the Example \ref{triadic_ex1}, but unfortunately it does not hold in general if there is variation in the numbers $M_Q$. 

However, the argument can still be saved by considering mass distributions more general than those defined by \eqref{eq:mass_distr_p}.
Given $J\in\N$, let $0<p<\tfrac1M$ and let $\eta=(\eta_1,\ldots,\eta_J)$ be a probability vector, where all the weights $\eta_i$ are non-zero. For each cube $Q\in \mathcal{Q}_k$  ($\delta^k< C\diam(X)$),
 we consider $J$ distinguished central subcubes $Q(1),\ldots,Q(J)\prec Q$ whose distance to the complement of $Q$ is comparable to $\delta^{k}$. Note that this is possible, provided $\delta$ is small enough depending on $J$ and $\dim_L X$, see e.g. \cite[Proof of Theorem 3.2]{KL2017}. 
Again, each boundary cube $Q'\prec Q$ inherits $p$ times the mass of its parent and the rest is distributed among the central child-cubes $Q(1),\ldots,Q(J)$ in the proportion of the weights $\eta_1,\ldots,\eta_J$. Denote the resulting measure by $\mu_{p,\eta}$. Note that the measures $\mu_p$ considered earlier correspond to the case $J=1$ and the trivial probability vector $\eta=(1)$. 

The proof of Lemma \ref{mucont} still works in this setting and implies that 
\begin{equation}\label{f_cont}
(p,\eta)\mapsto\dim_L\mu_{p,\eta}\text{ is continuous.}
\end{equation}
 Moreover, if e.g. $p\to 0$ and $\eta\to(1,0,\ldots,0)$, then it is very easy to see, by following a cube chain $Q^N\prec Q^{N-1}\ldots\prec Q^1\prec Q^0$, where $Q^{i+1}=Q^i(1)$ for each $i=0,\ldots,N-1$, that 
\begin{equation}\label{f_lim}
\dim_L\mu_{p,\eta}\longrightarrow 0\,.
\end{equation}
Finally, a result of K\"aenm\"aki and Lehrb\"ack \cite{KL2017} implies that given $s<\dim_L X$, it is possible to choose $\delta$, $J$, $p$, and $\eta$ so that 
\begin{equation}\label{f_dim}
\dim_L\mu_{p,\eta}>s\,.
\end{equation} In fact, for \eqref{f_dim}, it is enough to consider the uniform probability vector $\eta=(\tfrac1J,\tfrac1J,\ldots,\tfrac1J)$ for a suitably chosen $\delta>0$, $J\in\N$, and $p$, see \cite[Theorem 3.4]{KL2017}.  Theorem \ref{thm:lower} now follows by combining \eqref{f_cont}--\eqref{f_dim} and the intermediate value theorem for continuous functions.

\bibliographystyle{plain}
\bibliography{assouad_refs}

\end{document}